\documentclass[11pt]{amsart}
\usepackage{amsfonts,amssymb,amsmath,amsthm}
\usepackage{url}
\usepackage{enumerate}
\usepackage[all]{xy}
\usepackage[utf8]{inputenc}
\urlstyle{sf}

\usepackage{amsmath,amsfonts,amsthm,url,color,amssymb}

\usepackage{graphicx}
\usepackage{algpseudocode, algorithm}
\usepackage{hyperref}
\usepackage{todonotes}

\usepackage[norefs,nocites]{refcheck}

\newcommand{\F}{\mathbb{F}}

\newcommand{\ord}{\mathrm{ord}}

\newtheorem{theorem}{Theorem}[section]
\newtheorem{lemma}[theorem]{Lemma}

\newtheorem{corollary}[theorem]{Corollary}
\newtheorem{proposition}[theorem]{Proposition}
\theoremstyle{definition}
\newtheorem{definition}[theorem]{Definition}
\newtheorem{example}[theorem]{Example}

\theoremstyle{remark}
\usepackage{amssymb}
\newtheorem{remark}[theorem]{Remark}
\usepackage{enumerate}
\author[L. Reis]{Lucas Reis}
\address{Departamento de Matem\'{a}tica, Universidade Federal de Minas Gerais, Belo Horizonte MG, 31270901, Brazil}
\email{lucasreismat@gmail.com}

\thanks{This work was conducted during a pleasant visit of the author at the Mathematics Department of Carleton University - Canada, supported by Coordenação de Aperfeiçoamento de Pessoal de Nível Superior - Brasil (CAPES) -
Finance Code 001. We would like to thank the reviewer for her/his thoughtful comments and efforts towards improving the manuscript.}

\keywords{dynamics over finite fields, additive polynomials, factorization of polynomials}
\subjclass[2020]{Primary 11T55, Secondary 37P05}
\begin{document}

\title{Iterating additive polynomials over finite fields}
\begin{abstract}
Let $q$ be a power of a prime $p$, let $\F_q$ be the finite field with $q$ elements and, for each nonconstant polynomial $F\in \F_{q}[X]$ and each integer $n\ge 1$, let $s_F(n)$ be the degree of the splitting field (over $\F_q$) of the iterated polynomial $F^{(n)}(X)$. In 1999, Odoni proved that $s_A(n)$ grows linearly with respect to $n$ if $A\in \F_q[X]$ is an additive polynomial not of the form $aX^{p^h}$; moreover, if $q=p$ and $B(X)=X^p-X$, he obtained the formula $s_{B}(n)=p^{\lceil \log_p n\rceil}$. In this paper we note that $s_F(n)$ grows at least linearly unless $F\in \F_q[X]$ has an exceptional form and we obtain a stronger form of Odoni's result, extending it to affine polynomials. In particular, we prove that if $A$ is additive, then $s_A(n)$ resembles the step function $p^{\lceil \log_p n\rceil}$ and we indeed have the identity $s_A(n)=\alpha p^{\lceil \log_p \beta n\rceil}$ for some $\alpha, \beta\in \mathbb Q$, unless $A$ presents a special irregularity of dynamical flavour. As applications of our main result, we obtain statistics for periodic points of linear maps over $\F_{q^i}$ as $i\to +\infty$ and for the factorization of iterates of affine polynomials over finite fields. 
\end{abstract}

\maketitle

\section{Introduction}
Given a field $K$, a polynomial $F\in K[X]$ and a positive integer $n$, let $F^{(n)}(X)$ be the $n$-th fold composition of $F$ with itself. Some past works have dealt with polynomials $F$ in which the iterates $F^{(n)}(X)$ present special factorization over $K$: see~\cite{A05,AM00,K85}. The main case of study is when all the iterates $F^{(n)}(X), n\ge 1$ are irreducible over $K$: such polynomials are called {\em stable}. There are a few constructions of stable polynomials~\cite{bin, od}, where the field of coefficients $K$ is a number field or a function field. 
In~\cite{od}, Odoni proves the stability of ``generic" additive polynomials with coefficients in a function field and compute the Galois group of the iterates of such polynomials. Recall that a polynomial $f\in K[X]$ is additive if the identity $f(X+Y)=f(X)+f(Y)$ holds in the polynomial ring $K[X, Y]$. If $K$ has positive characteristic $p$, the additive polynomials in $K[X]$ are precisely those of the form $\sum_{i=0}^ha_iX^{p^i}$ with $a_i\in K$. However, in the case where $K$ is finite, such polynomials are usually not stable. In fact, no {\em affine polynomial} (i.e., $A(X)+b$ with $A(X)$ additive and $b\in K$) of degree at least $2$ can be stable if $K$ is finite: see Theorem~10 in~\cite{sh} for more details.

The stability of polynomials over finite fields have been explored primarily in the case of quadratic polynomials. A relaxation of stable polynomials is considered in~\cite{JB12}, yielding the so called {\em settled polynomials}. A more general problem on the factorization of $F^{(n)}(X)$ is proposed in~\cite{arxiv} in the case where $K$ is finite. In particular, the problem suggests to explore some arithmetic functions associated to $F^{(n)}(X)$ such as the degrees of the irreducible factors (largest, average) and the number of irreducible factors. For more details, see~ Question 18.9 in~\cite{arxiv}. Many issues related to Question 18.9 were eariler discussed in~\cite{GOS} and, more recently, in~\cite{R21}.

Still in the finite field setting, an arithmetic function that can be considered is $s_F(n)$, the smallest positive integer such that $F^{(n)}(X)$ splits completely over $\F_{q^{s_F(n)}}$. For instance, $F$ is stable if and only if $s_F(n)=\deg(F)^n$ for every $n\ge 1$. As previously mentioned, in~\cite{sh} the authors prove that no affine polynomial of degree at least $2$ and with coefficients in a finite field can be stable. In~\cite{odoni}, Odoni proved something much stronger for additive polynomials: if $A\in \F_q[X]$ is an additive polynomial not of the form $aX^{p^h}$, then $s_A(n)$ grows linearly with respect to $n$. More precisely, he proved that the sequence $\{s_A(n)\cdot n^{-1}\}_{n\ge 1}$ is bounded and its set of limit points equals $[\alpha_A, p\alpha_A]$ for some $\alpha_A>0$. For more details, see Theorem~2 in~\cite{odoni}. Odoni's result implies a refined information on the factorization of iterated additive polynomials: the iterates of additive polynomials typically splits into many distinct irreducible factors, all of small degree. He ends the paper with an interesting example, where the constant $\alpha_A$ is explicitly computed: if $q=p$, then $s_{X^p-X}(n)=p^{\lceil \log_p n\rceil}$ for every $n\ge 1$ and, in particular, $\alpha_{X^p-X}=1$. 

Odoni's proof on the growth of $s_A(n)$ with $A$ additive relies on transferring the problem to the skew polynomial ring  $\F_q[X, \sigma]$: this is a natural model for the ring of additive polynomials (with respect to the ordinary sum and composition of polynomials). In this approach, the problem of computing $s_A(n)$ translates into computing valuations of elements in $\F_q[X, \sigma]$ with respect to some ideals of the same ring. However, the latter is quite nontrivial: since the ring $\F_q[X, \sigma]$ is not commutative, we no longer have many useful factorization properties of elements and ideals. In particular, the computations in the paper are involved and technical. Moreover, in his approach he only deals with a subsequence $\{s_A(n\nu)\cdot (n\nu)^{-1}\}_{n\ge 1}$ with fixed $\nu\in \mathbb N$, noticing that such subsequence controls the oscilation of the whole sequence. We actually believe that he was trying to derive a simple closed formula like the one $s_{X^p-X}(n)=p^{\lceil \log_p n\rceil}$ for $q=p$. For more details, see Sections 3, 4 and 5 of~\cite{odoni}.




In this paper, we study the function $s_A(n)$ in more details and recover Odoni's results in a richer form. In particular, we prove that in many cases we have $s_A(n)=\alpha p^{\lceil \log_p \beta n\rceil}$ if $n$ is sufficiently large, where $\alpha, \beta\in \mathbb Q$ do not depend on $n$  and can explictly obtained from some initial values $s_A(1), \ldots, s_A(s_0)$. We also derive a weaker result for affine polynomials $A(X)+b$ and, in particular, we prove that the degree of the splitting fields of the iterates of such polynomials also present linear growth. The latter is a major obstruction in Odoni's approach: there is no identification of affine polynomials in the polynomial ring $\F_q[X, \sigma]$. Before we state our main results, we comment that the linear growth is the slowest possible growth for the the degree of the splitting field of ``generic" iterated polynomials. 

\begin{remark}\label{thm:rmk}
Let $\F_q$ be a finite field of characteristic $p$ and assume that $F\in \F_q[X]$ is not of the form $aX^{s}+b$, where $a\in \F_q^*$ and either $b=0$ or $b\ne 0$ and $s=p^h$ with $h\ge 0$. According to Corollary~2.5 of~\cite{R21}, there exists an absolute constant $c_F>0$ such that, for every positive integer $n$, the polynomial $F^{(n)}(X)$ contains an irreducible factor over $\F_q$ with degree at least $c_F\cdot n$. The latter readily implies that $\frac{s_F(n)}{n}\ge c_F$ for every $n\ge 1$. 
If $F(X)=aX^s+b$ with $a\in \F_q^*$ and either $b=0$ or $b\ne 0$ and $s=p^h$ with $h\ge 0$, it is direct to verify that $s_F(n)=1$ for every $n\ge 1$. In particular, these polynomials are genuine exceptions. 
\end{remark}

Our main result goes as follows.

\begin{theorem}\label{thm:main}
Let $q$ be a power of a prime $p$, let $A(X)\in \F_q[X]$ be an additive polynomial not of the form $aX^{p^h}$ and, for each $b\in \F_q^*$, set $A_b(X)=A(X)+b$. Then the following hold:
\begin{enumerate}[(i)]
\item for every $n\ge 1$, we have $s_A(np)\in \{s_A(n), p\cdot s_A(n)\}$ and, in particular, there exists $0\le i< \log_p n+1$ such that 
\begin{equation}\label{eq:exp}s_A(n)=s_A(1)\cdot p^i.\end{equation}
\item for every $b\in \F_q^*$ and every $n\ge 1$, we have
$s_{A_b}(n)\in \{s_A(n), p\cdot s_{A}(n)\}$;
\item the sequence $\left\{\frac{s_{A}(n)}{n}\right\}_{n\ge 1}$ is bounded and its set of limit points equals $[c_A, pc_A]$ for some $c_A>0$;
\item if $c_A$ is as in the previous item, the sequence $\left\{\frac{s_{A_b}(n)}{n}\right\}_{n\ge 1}$ is bounded and its set of limit points is contained in the interval $[c_A, p^2c_A]$;
\item $c_{A^{(k)}}=k\cdot c_A$ for every $k\ge 1$.
\end{enumerate}
Moreover, for $s_0=\max \{j>0\,:\, s_A(j)=s_A(1)\}$, there exists $A_*\in \F_q[X]$ of the form $R^q$ with $R\in \F_q[x]$ additive and an integer $r\ge 1$ such that 
$$(X^{q^{s_A(1)}}-X)^{q^r}=X^{q^{s_A(1)+r}}-X^{q^r}=A_*(A^{(s_0)}(X)).$$
In particular, $c_A\le \frac{s_A(1)}{s_0}$ with equality if and only if the $\F_p$-linear map $z\mapsto A_*(z)$ is not nilpotent over $\F_{q^{s_A(1)}}$. In the case of equality, we also have
 \begin{equation}\label{eq:main}s_A(n)=s_A(1)p^{\left\lceil \log_p \frac{n}{s_0}\right\rceil},\, n>s_0.\end{equation}
\end{theorem}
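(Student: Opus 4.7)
The plan is to proceed in three stages: construct $A_*$ and verify the polynomial identity; deduce the bound $c_A \le s_A(1)/s_0$; then analyze the equality case.

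For the construction, set $F = \fqn$ with $n = s_1 := s_A(1)$, and let $I := A^{(s_0)}(F)$, an $\F_p$-subspace of $F$ that is Frobenius-stable since $A^{(s_0)} \in \F_q[X]$; hence $\tilde R(Y) := \prod_{\gamma \in I}(Y-\gamma) \in \F_q[Y]$ is a separable additive polynomial. The definition of $s_0$ gives that the root set of $A^{(s_0)}$ lies in $F$, so for each $\gamma \in I$ the polynomial $A^{(s_0)}(X) - \gamma$ has all its roots in $F$ (shift a fixed $F$-preimage of $\gamma$ by a root of $A^{(s_0)}$), each of multiplicity $p^{es_0}$ where $p^e$ is the inseparability degree of $A$. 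Multiplying over $\gamma \in I$ yields $\tilde R(A^{(s_0)}(X)) = (X^{q^{s_1}} - X)^{p^{es_0}}$. Writing $q = p^k$ and taking the minimal $f \ge 0$ with $k \mid (es_0+f)$, set $R := \tilde R^{p^f}$ and $A_* := R^q$; this produces the identity with $r = 1 + (es_0+f)/k \ge 1$.

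The bound $c_A \le s_1/s_0$ is immediate from part (i): since $s_A(s_0+1) > s_1$ by the definition of $s_0$, the dichotomy $s_A(s_0 p) \in \{s_1, p s_1\}$ combined with monotonicity forces $s_A(s_0 p) = p s_1$; iterating, $s_A(s_0 p^k) \le s_1 p^k$, so $c_A = \liminf s_A(n)/n \le s_1/s_0$ by part (iii).

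For the equality case, the central tool is the iterated identity
\[ A_*^{(p^k)}\bigl(A^{(s_0 p^k)}(X)\bigr) = \bigl(X^{q^{s_1 p^k}} - X\bigr)^{q^{p^k r}}, \]
obtained by induction on $k$ from the fundamental identity, the commutations $A^{(s_0)} \circ B = B \circ A^{(s_0)}$ and $A_* \circ B = B \circ A_*$ (both consequences of $\F_q$-coefficients and $\F_q$-linearity of $B := X^{q^{s_1}} - X$), together with the Freshman's dream $B^{(p^k)} = X^{q^{s_1 p^k}} - X$ valid in characteristic $p$. Comparing root sets gives $V_{s_0 p^k}^* \subseteq \F_{q^{s_1 p^k}}$, hence $s_A(s_0 p^k) \le s_1 p^k$. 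Under the non-nilpotency hypothesis on $A_*|_F$, the matching lower bound $V_{s_0 p^k}^* \not\subseteq \F_{q^{s_1 p^{k-1}}}$ is the main step; I would prove it by contradiction. If containment held, then since $B^{(p^k)}$ vanishes on $E := \F_{q^{s_1 p^{k-1}}}$, the iterated identity would force $A^{(s_0 p^k)}(E) \subseteq \ker A_*^{(p^k)}|_E$, and unwinding this through the Fitting decomposition of $A_*|_E$, together with $\ker A_*|_F = I = A^{(s_0)}(F)$, would compel $A_*|_F$ to be nilpotent, contradicting the hypothesis. Granting the lower bound, the formula $s_A(n) = s_1 p^{\lceil \log_p(n/s_0)\rceil}$ for $n > s_0$ is extracted via part (v), which reduces the claim to the case $s_0 = 1$ by replacing $A$ by $A^{(s_0)}$, combined with part (i) forcing each $s_A(n)$ to lie in $\{s_1 p^j\}_{j \ge 0}$; monotonicity then pins $s_A(n)$ to be constant equal to $s_1 p^k$ on each interval $(s_0 p^{k-1}, s_0 p^k]$. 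The converse direction ($A_*|_F$ nilpotent $\Rightarrow$ $c_A < s_1/s_0$) follows by sharpening the upper bound via the iterated identity using the nilpotency index. The main obstacle is the lower-bound step just sketched: translating the linear-algebraic non-nilpotency of $A_*|_F$ into the geometric statement that the splitting field of $A^{(s_0 p^k)}$ properly grows at each power of $p$.
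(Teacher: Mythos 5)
Your proposal only addresses the final ``Moreover'' clause of the theorem. Items (i)--(v) are invoked as known facts throughout (you cite (i) for the dichotomy, (iii) to identify $c_A$ with the liminf, (v) for the reduction to $s_0=1$), but they are part of the statement and constitute the bulk of the paper's proof: the dichotomy $s_A(np)\in\{s_A(n),p\,s_A(n)\}$ rests on Lemma~\ref{key} together with the commutation furnished by Lemma~\ref{lem:lin}, item (ii) needs the affine-versus-additive comparison via the congruence argument modulo $A_b^{(n)}(X)$, and item (iii) needs the monotone subsequence $\{s_A(s_i)/s_i\}$ and the density argument. As a proof of the theorem as stated, the attempt is therefore incomplete from the outset. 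That said, the pieces you do supply are largely sound: your explicit construction of $A_*$ as (a scalar multiple of) the vanishing polynomial of the image $A^{(s_0)}(\F_{q^{s_A(1)}})$, raised to suitable powers of $p$, is a pleasant alternative to the paper's appeal to Lemma~\ref{key} (you only need to normalize the leading coefficient so that the composition is exactly $(X^{q^{s_A(1)}}-X)^{q^r}$ rather than a constant multiple of it); the iterated identity and the cancellation trick giving the commutation of $A_*$ with $A^{(s_0)}$ are correct; and $c_A\le s_A(1)/s_0$ does follow from (i) and (iii) as you say.

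The decisive gap, beyond the one you acknowledge, is that your lower-bound target is too weak to yield Eq.~\eqref{eq:main}. You aim to show (under non-nilpotency) that the root set of $A^{(s_0p^k)}$ is not contained in $\F_{q^{s_1p^{k-1}}}$, i.e. $s_A(s_0p^k)=s_1p^k$. But knowing the endpoint values $s_A(s_0p^{k-1})=s_1p^{k-1}$ and $s_A(s_0p^k)=s_1p^k$, monotonicity only pins $s_A(n)\in\{s_1p^{k-1},s_1p^k\}$ for $s_0p^{k-1}<n<s_0p^k$; it does not make $s_A$ ``constant equal to $s_1p^k$'' on that interval, and the reduction to $s_0=1$ via item (v) only controls $s_A$ at multiples of $s_0$, leaving the same indeterminacy for the intermediate $n$ when $s_0\ge 2$. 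What Eq.~\eqref{eq:main} requires is precisely $\max\{j:\,s_A(j)=s_1p^i\}=s_0p^i$ for every $i$, i.e. the splitting degree must jump \emph{immediately after} $s_0p^i$. The paper gets exactly this: $s_i\ge s_0p^i$ with equality if and only if $A_*^{(p^i)}(X)$ is not of the form $E(A(X))$ with $E$ additive, and then a chain of equivalences (using that $A_*$ is a $q$-th power commuting with $A$) identifies the failure of equality for some $i$ with nilpotency of $z\mapsto A_*(z)$ on $\F_{q^{s_A(1)}}$; this simultaneously gives the converse direction (nilpotent $\Rightarrow$ $c_A<s_A(1)/s_0$), which you dispatch in a single unsubstantiated sentence. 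Your Fitting-decomposition contradiction is, by your own admission, only a sketch, and it is aimed at the weaker endpoint statement; to complete the argument you would need to upgrade it to the ``no additive $E$ with $A_*^{(p^i)}=E(A)$'' criterion (or an equivalent) at every level $i$.
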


Next we provide one numerical example, showing the applicability of Theorem~\ref{thm:main}.

\begin{example}
Let $\alpha\in \F_4$ be such that $\alpha^2=\alpha+1$ and set $A(X)=X^8+\alpha X\in \F_4[X]$. A direct computation yields $s_A(1)=3$ and $s_A(2)=6$, hence $s_0=1$. Moreover, the polynomial $A_*(X)=X^{32}+\alpha^2 X^{4}=(X^{8}+\alpha^2X)^4$ satisfies
$$A_*(A(X))=X^{256}-X^4.$$
It is direct to verify that, in this case, the map $z\mapsto A_*(z)$ is not nilpotent over $\F_{4^3}$. Theorem~\ref{thm:main} implies $c_A=3$ and 
$$s_A(n)=3p^{\lceil \log_pn\rceil}, n\ge 2.$$
\end{example}

\begin{remark}
In the context of Theorem~\ref{thm:main} we see that Eq.~\eqref{eq:main} provides a simple closed formula for $s_A(n)$ whenever the companion polynomial $A_*(X)$ does not induce a nilpotent linear map over the splitting field of $A(X)$. Additive polynomials yielding nilpotent linear maps over extensions of finite fields were previously introduced and explored by the author in~\cite{R18}. However, we discuss other issues there, namely the construction and cycle decomposition of permutation polynomials arising from nilpotent additive polynomials. Nevertheless, such polynomials yield very special dynamical systems over finite fields: if $N\in \F_{q}[X]$ is additive such that $z\mapsto N(z)$ is nilpotent over $\F_{q^M}$, then $0\in \F_{q^M}$ is the only periodic point under this map. In particular, the corresponding directed graph representation of the map $z\mapsto N(z)$ over $\F_{q^M}$ (called {\em functional graph}) contains only one connected component. In the proof of Theorem~\ref{thm:main} we prove that the condition on $A_*(X)$ being nilpotent is also equivalent to the following: for each positive integer $n$, there exists $k=k(n)$ such that $A_*^{(k(n))}(X)$ is divisible by $A^{(n)}(X)$. Since $A, A_*$ are additive, we have that $A_*(0)=A(0)=0$ and so the latter implies that 
$A^{-\infty}(0)\subseteq A_*^{-\infty}(0)$. Here, for a polynomial $F\in \F_q[X]$, the set $$F^{-\infty}(0)=\bigcup_{j\ge 0}\{y\in \overline{\F}_q\,:\, F^{(j)}(y)=0\}$$ is just the {\em reversed orbit} of $0$ under the map $z\mapsto F(z)$ over  $\overline{\F}_q$.
\end{remark}

If we assume that $A(X)\in \F_q[X]$ is of the form $\sum_{i=0}^ma_iX^{q^i}$, we directly obtain an explicit formula for $s_A(n)$ like the one given in Eq.~\eqref{eq:main}: in particular, we can compute the constant $c_A$. The latter generalizes the case  $A(X)=X^p-X$ given by Odoni and, in fact, it covers the class of additive polynomials $A$ with coefficients in $\F_p[X]$ (i.e., we are simply considering the case $q=p$).  For more details, see Theorem~\ref{thm:special}.

As applications of Theorem~\ref{thm:main}, we also provide statistics for the factorization of iterated affine polynomials and for the number of periodic points under maps $z\mapsto A(z)$ with $A$ additive. First, we need the following definition.

\begin{definition}
For each nonconstant polynomial $F\in \F_q[X]$ and each positive integer $n$, let $N_F(n)$ be the number of distinct irreducible factors of $F^{(n)}(X)$ over $\F_q$. Moreover, if $f_{1, n}, \ldots, f_{N_F(n), n}\in \F_q[X]$ are the distinct irreducible factors of $F^{(n)}(X)$ over $\F_q$, we set 
$$\rho_F(n)=\frac{1}{N_F(n)}\sum_{i=1}^{N_F(n)}\deg(f_{i, n}),$$
the average degree of the $f_{i, n}$'s.
\end{definition}

In the following theorem we prove that $\rho_B(n)$ also grows linearly with respect to $n$ if $B$ is a generic affine polynomial.

\begin{theorem}\label{thm:main2}
Let $A\in \F_q[X]$ be an additive polynomial that is not of the form $aX^{p^h}$, fix $b\in \F_q$ and set $B(X)=A(X)+b$. Then there exists $\alpha_B, \beta_B>0$ such that $\frac{\rho_B(n)}{n}\in [\alpha_B, \beta_B]$ for every $n\ge 1$.
\end{theorem}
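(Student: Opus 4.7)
The plan is to prove the upper and lower bounds on $\rho_B(n)/n$ separately. The upper bound is nearly immediate: every irreducible factor $f_{i,n}$ of $B^{(n)}$ has degree dividing $s_B(n)$, so $\rho_B(n)\le s_B(n)$. Combining items (i) and (ii) of Theorem~\ref{thm:main} with the identification $B=A_b$ gives $s_B(n)\le p^2 s_A(1)\,n$ for every $n\ge 1$, so $\beta_B:=p^2 s_A(1)$ works.

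For the lower bound I would first write $B^{(n)}(X)=A^{(n)}(X)+c_n$, where $c_n=\sum_{i=0}^{n-1}A^{(i)}(b)\in \F_q$ is obtained by induction on $n$ using additivity of $A$. Hence the set $V\subseteq \overline{\F_q}$ of distinct roots of $B^{(n)}$ is a coset of the $\F_p$-subspace $\ker A^{(n)}\subseteq \overline{\F_q}$, and since $A$ is surjective on $\overline{\F_q}$ one computes $d_n:=|V|=|\ker A^{(n)}|=|\ker A|^n=p^{rn}$, where $r:=\dim_{\F_p}\ker A\ge 1$ (the bound $r\ge 1$ uses $A\ne aX^{p^h}$). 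Letting $s=s_B(n)$ and applying Burnside's formula to the action of the cyclic group $\mathrm{Gal}(\F_{q^s}/\F_q)$ on $V$ yields
$$ N_B(n) \;=\; \frac{1}{s}\Bigl(d_n+\sum_{k\mid s,\,k>1}\varphi(k)\,|V\cap \F_{q^{s/k}}|\Bigr), $$
and the coset structure of $V$ gives the two-sided bound $|V\cap \F_{q^{s/k}}|\le \min(q^{s/k},d_n)$.

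The key step is to split the inner sum at an absolute constant $k_0$ depending only on $A$ (for instance $k_0:=\lceil 2e\beta_B/r\rceil+1$, where $e=[\F_q:\F_p]$). For $k\le k_0$ I would use $|V\cap \F_{q^{s/k}}|\le d_n$; since $\sum_{k\le k_0}\varphi(k)=O(k_0^2)$, these terms contribute at most $C_1 d_n$ for some constant $C_1=C_1(A)$. For $k>k_0$ I would use $|V\cap \F_{q^{s/k}}|\le q^{s/k_0}$; summing crudely, these terms contribute at most $s\,q^{s/k_0}$, which is $o(d_n/s)$ after multiplication by $1/s$, because the choice of $k_0$ forces $es/k_0<rn$ for $n$ large. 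Putting both halves together yields $N_B(n)\le C\,d_n/s$ for some constant $C=C(A)$, hence $\rho_B(n)=d_n/N_B(n)\ge s_B(n)/C$ for all sufficiently large $n$. Combined with Remark~\ref{thm:rmk} (which applies to $B$ since $A\ne aX^{p^h}$ keeps $B$ outside the exceptional class, yielding $s_B(n)\ge c_B n$ for every $n\ge 1$), this gives $\rho_B(n)/n\ge c_B/C$ for $n$ large; the finitely many remaining small $n$ are handled by the trivial bound $\rho_B(n)\ge 1$, and the minimum over all $n$ produces the desired constant $\alpha_B>0$.

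The main obstacle is precisely this Burnside balancing: a uniform application of the bound $|V\cap \F_{q^{s/k}}|\le d_n$ across all $k$ would introduce an extra factor of $s$ in $N_B(n)$ and ruin the lower bound on $\rho_B$, so one genuinely needs the dichotomy at the constant threshold $k_0$, exploiting both the summability of $\varphi$ on bounded ranges and the rapid decay of $q^{s/k}$ as $k$ grows. The dichotomy is made effective by the dramatic gap between the exponential growth of $d_n=p^{rn}$ and the linear growth $s_B(n)=\Theta(n)$ supplied by Theorem~\ref{thm:main}.
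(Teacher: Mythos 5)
Your proof is correct, but the crucial step is genuinely different from the paper's. Both arguments start from the same identity $\rho_B(n)=p^{dn}/N_B(n)$, coming from the fact that $B^{(n)}$ has exactly $p^{dn}$ distinct roots (you obtain this from the coset/kernel description of the root set $V$ and surjectivity of $A$ on $\overline{\F}_q$; the paper writes $B^{(n)}=\tilde B_n^{p^{mn}}$ with $\tilde B_n$ affine separable of degree $p^{dn}$), and both reduce the theorem to showing $N_B(n)=O\left(p^{dn}/n\right)$. You get this bound by Burnside's orbit-counting formula for the cyclic group $\mathrm{Gal}(\F_{q^{s}}/\F_q)$, $s=s_B(n)$, acting on $V$, splitting the divisor sum at a constant threshold $k_0$ and using only the crude bounds $|V\cap\F_{q^{s/k}}|\le \min(q^{s/k},p^{dn})$ together with $s_B(n)=\Theta(n)$ supplied by Theorem~\ref{thm:main} and Remark~\ref{thm:rmk}. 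The paper instead stratifies $V$ along the tower $\F_{q^{M}}\subset\F_{q^{Mp}}\subset\cdots\subset\F_{q^{Mp^t}}$, bounds the number of factors coming from each layer by the fact that elements of $\F_{q^{Mp^j}}\setminus\F_{q^{Mp^{j-1}}}$ have degree at least $p^j$, and, in the step your route avoids entirely, uses item (i) of Theorem~\ref{thm:main} to prove that the dimensions $\delta_j$ of the $\F_p$-affine spaces $V\cap \F_{q^{Mp^j}}$ increase strictly, giving $\delta_j-j\le dn-t$ and then $N_B(n)\le (k(t)+2)p^{dn-t}$ after a balancing argument with a bounded threshold $k(t)$. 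Your version is more elementary and isolates the only two inputs actually needed (exponentially many distinct roots, linear growth of the splitting-field degree), so it would apply verbatim to any polynomial enjoying these two properties; the paper's stratification exploits more of the additive structure and in exchange records finer information about how the roots distribute among the subfields $\F_{q^{Mp^j}}$. One small sharpening: the justification for discarding the tail $k>k_0$ should be that your explicit choice $k_0>2e\beta_B/r$ yields $es/k_0\le rn/2$, a gap of order $n$; the weaker inequality $es/k_0<rn$ that you cite would not by itself force $q^{s/k_0}=o\left(p^{rn}/s\right)$, but with your stated $k_0$ the stronger bound does hold, so the argument stands.
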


Proposition 5.18 in~\cite{R21} essentially proves Theorem~\ref{thm:main2} in the case where $A(X)$ is of the form $\sum_{i=0}^ma_iX^{q^i}$. However, this case is much simpler since we can even describe the degree distribution of the irreducible factors of $A^{(n)}(X)$ over $\F_q$. In particular, the techniques employed there cannot be extended to arbitrary additive polynomials. See Subsection 5.2 in~\cite{R21} for more details.

The following theorem entails that the proportion of periodic points under additive polynomial maps is, in general, irregular through the finite extensions of $\F_q$.

\begin{theorem}\label{thm:main3}
Let $A\in \F_{q}[X]$ be an additive polynomial and, for each integer $n$, let $\pi_A(n)$ be the number of elements in $\F_{q^n}$ that are periodic under the map $z\mapsto A(z)$. Then the limit $$\lim\limits_{n\to \infty}\frac{\pi_A(n)}{q^n},$$
does not exist unless $A(X)=aX^{p^h}$ for some $a\in \F_q$ and some $h\ge 0$. In the latter case, the limit equals $0$ if $a=0$ and equals $1$ if $a\ne 0$.
\end{theorem}

Observe that $\frac{\pi_A(n)}{q^n}$ is just the {\em proportion} of points in $\F_{q^n}$ that are periodic with respect to the map $z\mapsto A(z)$. The latter can be considered in a more general context, where $A$ is replaced by a generic polynomial $F\in \F_q[X]$. Very little is known on the behaviour of $\frac{\pi_F(n)}{q^n}$: some related problems are considered in~\cite{derek, Juul}. More recently, in~\cite{bri} the proportion of periodic points is explored in the case of quadratic maps over the projective space $\mathbb P^{1}(\F_{q^n})$, where the analogue limit equals zero (up to some exceptional cases). An earlier work~\cite{mich} explored the proportion of periodic points for the case of monomials and Chebyshev polynomials. In particular, for these two classes of polynomials, the corresponding limit does not exist in general. Along with monomials and Chebyshev polynomials, additive polynomials have {\em wild} dynamical behaviour. For instance, these three families of polynomials cover most of the known {\em exceptional polynomials} (i.e., polynomials whose induced map permute infinitely many finite extensions of $\F_q$). For more details on exceptional polynomials, see Section 8.4 of~\cite{HB}. 

It is worthy mentioning that the dynamics of Chebyshev polynomials and monomials are well understood by means of their {\em functional graphs} (a directed graph representation of the iteration of maps). In fact, the results 
in~\cite{QR19} imply a simple description of the graphs for such polynomials and also for some special additive polynomials.
On the other hand, the dynamics of arbitrary additive polynomials over finite fields is only well understood from the Linear Algebra point of view: see~\cite{bach, elspas, toledo, hua, reis}. Our proof for Theorem~\ref{thm:main3} relies on finding finite extensions of $\F_q$ containing small/large number of roots of the iterates of $A$. However, we do not obtain a complete description of the dynamics of the map $z\mapsto A(z)$. Instead, those extensions are detected by employing item (i) of Theorem~\ref{thm:main}, combined with Theorem~\ref{thm:dyn}, a general basic result on the structure of periodic points of linear maps over finite fields. In particular we prove that, as $n\to \infty$, the quotient $\frac{\pi_A(n)}{q^n}$ must oscilate between zero and a positive constant unless $A(X)=aX^{p^h}$ for some $a\in \F_q$ and some $h\ge 0$.


In the proof of our results we employ many arithmetic properties of additive polynomials, regarding them as elements of the polynomial ring $\F_q[X]$ but also as maps over the algebraic closure $\overline{\F}_q$ of $\F_q$. A crucial tool is Lemma~\ref{lem:lin}, where we compile many elementary results on the composition of additive polynomials over finite fields, including some commuting properties.

We end this section with the structure of the paper. In Section 2 we provide basic machinery and in Section 3 we prove our main result, Theorem~\ref{thm:main}. Finally, in Section 4 we provide an enhanced version of Theorem~\ref{thm:main} that applies to a subclass of additive polynomials, along with some applications of the same theorem.

\section{Preparation}
For the rest of this paper, we fix $q$ a power of a prime $p$.  In this short section we provide two technical lemmas that will be frequently employed. 

\begin{lemma}\label{lem:lin}
Let $A, B\in \F_q[X]$ be additive polynomials with $A(X)$ nonzero. Then $A(X)$ divides $B(X)$ if and only if there exists another additive polynomial $C\in \F_q[X]$ such that $B(X)=C(A(X))$. Moreover, if $B(X)=C(A(X))$ and $B(X)=\sum_{i=0}^sa_iX^{q^i}$ for some $a_i\in \F_p$, then the following hold:
\begin{enumerate}[(i)]
\item $A(C(X))=B(X)$, i.e., $A(C(X))=C(A(X))$;
\item if $A(X)=A_0^{(k)}(X)$ for some $k\ge 1$ and some additive $A_0\in \F_q[X]$, then $A_0(C(X))=C(A_0(X))$.
\end{enumerate}
\end{lemma}
\begin{proof}
The first statement is a routine exercise and can be proved, for instance, by induction on the degree of $B$; see also Exercise 3.68  in~\cite{LiNi}. For the proof of item (i), it suffices to prove that $A(X)$ and $B(X)$ commute. In fact, in this case, the equality $B(X)=C(A(X))$ implies that 
$$B(A(X))=A(B(X))=A(C(A(X))),$$
and so by setting $Y=A(X)$ we obtain the polynomial identity $B(Y)=A(C(Y))$. The fact that $A(X)$ and $B(X)$ commute follows by induction on the number of nonzero coefficients in $A(X)$ so we omit details. Item (ii) follows in a similar way, noticing that the polynomials $A_0(X)$ and $B(X)$ commute. 
\end{proof}
Given a polynomial $f(X)=\sum_{i=0}^ra_iX^i\in \F_q[X]$, set $L_f(X)=\sum_{i=0}^ra_iX^{q^i}$. The polynomial $L_f(X)$ is commonly known as the {\em $q$-linearized associate} to $f(X)$. The following lemma is easily verified.

\begin{lemma}\label{lem:ass}
For $f, g\in \F_q[X]$, the following hold:
\begin{enumerate}[(a)]
\item $L_{f+g}(X)=L_f(X)+L_g(X)$;
\item $L_{fg}(X)=L_{f}(L_{g}(X))$. In particular, for every $n\ge 1$, we have $L_{f}^{(n)}(X)=L_{f^n}(X)$. 
\item $L_{c}(X)=cX$ for every $c\in \F_q$.
\end{enumerate}
\end{lemma}

\section{Proof of Theorem~\ref{thm:main}}
In this section we prove our main result. We fix the following important notation: for integers $r, s\ge 0$ with $s>0$, set 
$$\mathcal S_{s, r}(X)=X^{q^{s+r}}-X^{q^r}=(X^{q^s}-X)^{q^r}.$$
Lemma~\ref{lem:ass} implies the following important identity that will be further used: for every integer $i\ge 0$,
\begin{equation}\label{eq:crit}\mathcal S_{s, r}^{(p^i)}(X)=\mathcal S_{sp^i, rp^i}(X).\end{equation}
We have the following auxiliary result.

\begin{lemma}\label{key}
Let $A\in \F_q[X]$ be a nonzero additive polynomial and let $j$ be a positive integer. Then the following are equivalent:

\begin{enumerate}[(i)]
\item $A(X)$ splits completely in $\F_{q^j}$;
\item $A(X)$ divides $\mathcal S_{j, r}(X)$ for some $r\ge 0$;
\item there exists $r\ge 0$ and an additive polynomial $B\in \F_q[X]$ such that $B(A(X))=\mathcal S_{j, r}(X)$. 
\end{enumerate}
Moreover, if $A$ is separable, we can assume that $r=0$. 
\end{lemma}
\begin{proof}
For the implication (i)$\to$(ii), assume that $A(X)$ splits completely in $\F_{q^j}$. Hence every irreducible divisor of $A(X)$ over $\F_q$ divides $X^{q^j}-X$.
Take $r$ large enough so that, in the factorization of $A(X)$ over $\F_{q}$, no irreducible factor has multiplicity greater than $q^r$. Hence $A(X)$ divides $(X^{q^{j}}-X)^{q^r}=\mathcal S_{j, r}(X)$. The implication (ii)$\to$(iii) follows directly by Lemma~\ref{lem:lin}. For the implication (iii)$\to$(i), assume that $B(A(X))=\mathcal S_{j, r}(X)$. Again, from Lemma~\ref{lem:lin}, it follows that $A(X)$ divides $\mathcal S_{j, r}(X)=(X^{q^{j}}-X)^{q^r}$ and so the roots of $A(X)$ are also roots of $X^{q^j}-X$. In other words, $A(X)$ splits completely in $\F_{q^j}$. Finally, if $A(X)$ is separable, then $A(X)$ divides $\mathcal S_{j, r}(X)=(\mathcal S_{j, 0}(X))^{q^r}$ if and only if it divides $S_{j, 0}(X)$.
\end{proof}

We end this part with an important remark that is used along the way without further mention. If $A\in \F_q[X]$ is nonzero and additive, then $A(0)=0$ and so the splitting field of $A^{(n)}(X)$ contains the splitting field of $A^{(j)}(X)$ whenever $1\le j\le n$. In particular, for every $1\le j<n$, the number $s_{A}(j)$ divides $s_A(n)$. 


\subsection{Proof of Theorem~\ref{thm:main}}
We prove the items separately.

\begin{enumerate}[(i)]
\item Fix $n\ge 1$ and let $\F_{q^K}$ be the splitting field of $A^{(n)}(X)$ over $\F_q$, that is, $s_A(n)=K$. From Lemma~\ref{lem:lin}, there exists an additive polynomial $B\in \F_q[X]$ and an integer $r\ge 0$ such that \begin{equation}\label{eq:it} B(A^{(n)}(X))=\mathcal S_{K, r}(X),\end{equation} and  the polynomials $A, B$ commute. In particular, taking the $p$-th iterate on both sides of Eq.~\eqref{eq:it}, we obtain
$$B^{(p)}(A^{(np)}(X))=\mathcal S_{K, r}^{(p)}(X)=\mathcal S_{pK, pr}(X).$$ 
From Lemma~\ref{key}, the polynomial $A^{(np)}(X)$ splits completely in $\F_{q^{pK}}$. Therefore, $s_{A}(np)$ divides $pK$. 
Recall that, as $np>n$, the number $s_{A}(np)$ is divisible by $s_A(n)=K$. In particular, we obtain $$s_A(np)\in \{K, pK\}=\{s_A(n), p\cdot s_A(n)\}.$$
The latter readily implies that $s_{A}(p^j)\le s_A(1)\cdot p^j$ for every $j\ge 0$. Recall that $s_A(m)$ divides $s_A(n)$ if $m<n$. In particular, it is clear that for every $n\ge 1$ we have $s_A(n)=s_A(1)p^i$ for some $i\ge 0$. Moreover, if $n\ge 1$ and $j=\lceil \log_p n\rceil<1+\log_p n$, we have $n\le p^j$ and so $$s_A(n)\le s_A(p^j)\le s_A(1)\cdot p^j.$$

\item It is direct to verify that there exists a sequence $\{\beta_{\ell}\}_{\ell\ge 1}$ of elements in $\F_q$ such that $A_b^{(\ell)}(X)=A^{(\ell)}(X)+\beta_{\ell}$ for every $\ell\ge 1$. Now fix $n\ge 1$ and let $\F_{q^S}$ and $\F_{q^T}$ be the splitting fields of $A_b^{(n)}(X)$ and $A^{(n)}(X)$, respectively. In particular, the roots of $A_b^{(n)}(X)$ comprise an $\F_p$-affine space $\mathcal L\subseteq \F_{q^S}$ whose difference set $\mathcal L-\mathcal L=\{y-z\,:\, y, z\in \mathcal L\}$ coincides with the set of the roots of $A^{(n)}(X)$. Therefore, the roots of $A^{(n)}(X)$ lie in $\F_{q^S}$ and so $T$ divides $S$. Conversely, since $A^{(n)}(X)$ splits completely in $\F_{q^T}$, Lemma~\ref{key} entails that $\mathcal S_{T, r}(X)=B(A^{(n)}(X))$ for some additive polynomial $B\in \F_q[X]$ and some $r\ge 0$. In particular, 
$$\mathcal S_{T, r}(X)\equiv -B(\beta_n)\pmod {A_b^{(n)}(X)}.$$
However, recall that $\beta_n\in \F_q$ and $B\in \F_q[X]$, hence $B(\beta_n)\in \F_q$ and so $\mathcal S_{T, r}(X)$ vanishes at $-B(\beta_n)$. In conclusion, 
$$\mathcal S_{T, r}^{(2)}(X)\equiv \mathcal S_{T, r}(-B(\beta_n))\equiv 0\pmod {A_b^{(n)}(X)}.$$ Since $p\ge 2$, we obtain
$$\mathcal S_{Tp, rp}(X)=\mathcal S_{T, r}^{(p)}(X)\equiv \mathcal S_{T, r}^{(p-2)}(0)\equiv 0\pmod {A_b^{(n)}(X)}.$$
Observe that, for $y\in \overline{\F}_q$, we have $S_{Tp, rp}(y)=0$ if and only if $y\in \F_{q^{Tp}}$. Hence $A_b^{(n)}(X)$ splits completely in $\F_{q^{pT}}$ and so $S$ divides $pT$. Since $T$ divides $S$, we conclude that $S\in \{T, pT\}$. In other words, $$s_{A_b}(n)\in \{s_A(n), p\cdot s_A(n)\}.$$

\item For simplicity, we write $M=s_A(1)$ and $\gamma_{A}(n)=\frac{s_A(n)}{n}$. Item (i) entails that $s_{A}(n)<Mp^{1+\log_p n}=Mpn$ for every $n\ge 1$. In particular, the  sequence $\left\{\gamma_A(n)\right\}_{n\ge 1}$ is bounded. For each integer $i\ge 0$, item (i) and Remark~\ref{thm:rmk} entail that 
$$s_i=\max \{j>0\,:\, s_A(j)=Mp^i\},$$
is a well defined positive integer. Since $ps_i>s_i$, we obtain $s_A(ps_i)\ge Mp^{i+1}$. Item (i) implies $s_A(ps_i)\le Mp^{i+1}$ and so $s_A(ps_i)= Mp^{i+1}$. In particular, it follows by the definition that $s_{i+1}\ge ps_i$. The latter combined with the identity $s_A(s_{i+1})=ps_A(s_i)$ implies that the sequence ${\bf \Gamma}:=\{\gamma_A(s_i)\}_{i\ge 0}$ is non increasing. Since $A(X)$ is not of the form $aX^{p^j}$, Remark~\ref{thm:rmk} entails that ${\bf \Gamma}$ is bounded below by a positive real number. Hence there exists $c_A>0$ such that
$$\lim\limits_{i\to +\infty} \gamma_A(s_i)=c_A.$$
If $n$ is an integer with $s_i<n\le s_{i+1}$, then $s_A(n)=Mp^{i+1}$ and so 
$$\gamma_A(s_{i+1})=\frac{Mp^{i+1}}{s_{i+1}}\le \gamma_A(n)=\frac{Mp^{i+1}}{n}<\frac{Mp^{i+1}}{s_i}=p\gamma_A(s_i).$$
In particular, $$c_A=\liminf\limits_{n\to+\infty}\gamma_A(n)\le \limsup\limits_{n\to +\infty} \gamma_A(n)\le pc_A.$$ The latter implies that the set of limit points of $\{\gamma_A(n)\}_{n\ge 0}$ is contained in the interval $[c_A, pc_A]$. Since such set of limit points must be closed, it suffices to prove that the sequence $\left\{\gamma_A(n)\right\}_{n\ge 1}$  is dense in the open interval $(c_A, pc_A)$. For each real number $\alpha\in (1, p)$ and each integer $i\ge 0$, set $t_i=\left\lceil  \frac{ps_i}{\alpha}\right\rceil$. In particular, $t_i=\frac{ps_i}{\alpha}+\kappa_i$ for some $\kappa_i\in [0, 1)$. It follows by the definition that $s_i< t_i\le ps_i$. Recall that $s_{i+1}\ge ps_i$, hence $s_A(t_i)=Mp^{i+1}=ps_A(s_i)$. Since $\{s_i\}_{i\ge 0}$ is unbounded, the sequence
$$\gamma_A(t_i)=\frac{ps_A(s_i)}{t_i}=\frac{\alpha s_A(s_i)}{s_i+\frac{\alpha}{p}\kappa_i},\, i\ge 0\,, $$
converges to $\alpha c_A$. 

\item This item follows by item (iii) and the following restatement of item (ii): $$\frac{s_{A_b}(n)}{n}\in \left\{\frac{s_A(n)}{n}, \frac{ps_A(n)}{n}\right\},\, n\ge 1.$$

\item Fix $k$ a positive integer. We employ the notation of the previous items. Observe that $s_{A^{(k)}}(n)=s_A(nk)$ for every $n\ge 1$ and so $\gamma_{A^{(k)}}(n)=k\gamma_A(n)$. From this fact and item (iii), we conclude that the set of limit points of $\{\gamma_{{A}^{(k)}}(n)\}_{n\ge 1}$ is contained in the interval $[kc_A, pkc_A]$. Hence $c_{A^{(k)}}\ge kc_A$ and $pc_{A^{(k)}}\le pkc_A$. In conclusion, $c_{A^{(k)}}=kc_A$. 
\end{enumerate}
We proceed to the proof of the remaining statements in Theorem~\ref{thm:main}. From hypothesis, the polynomial $A^{(s_0)}(X)$ splits completely in $\F_{q^M}$, hence  Lemma~\ref{key} entails that there exists $r\ge 0 $ and $A_*\in \F_q[X]$ such that $$A_*(A^{(s_0)}(X))=X^{q^{s_A(1)+r}}-X^{q^r}.$$ Replacing $r$ by $r+1$ and $A_*$ by $A_*^q$ if necessary, we can always assume that $r\ge 1$ and that the polynomial $A_*$ is of the form $R^q$ with $R\in \F_q[x]$ additive. As before, for each integer $i\ge 0$, set $s_i=\max \{j>0\,:\, s_A(j)=Mp^i\}$, where $M=s_A(1)$. From Lemma~\ref{lem:lin}, the polynomials $A$ and $A_*$ commute. In particular, as in the proof of item (i), we obtain
$$A_*^{(p^i)}(A^{(s_0p^i)}(X))=X^{q^{Mp^{i}+rp^i}}-X^{q^{rp^i}}=\mathcal S_{Mp^i, rp^i}(X).$$
Hence $s_i\ge s_0p^i$ with equality if and only if $A_*^{(p^i)}(X)$ is not of the form $E(A(X))$ for some additive $E\in \F_q[X]$. In particular, $\frac{s_A(s_i)}{s_i}\le \frac{M}{s_0}$ with equality if and only if $s_i=s_0p^i$. Recall that the sequence $\left\{\frac{s_A(s_i)}{s_i}\right\}_{i\ge 0}$ is non increasing and, by the definition, it converges to $c_A$. 
Hence $c_A\le \frac{M}{s_0}$ with equality if and only if $\frac{s_A(s_i)}{s_i}=\frac{M}{s_0}$ for every $i\ge 0$, that is, $s_i=s_0p^i$ for every $i\ge 0$. As we have seen, the latter is equivalent to $A_*^{(p^{i})}(X)$ not being of the form $E(A(X))$ with $E\in \F_q[X]$ additive, for every $i\ge 0$. Since $A_*$ is of the form $R^q$ with $R\in \F_q[x]$ additive, we see that $A_*^{(n)}(X)$ is divisible by $X^{q^M}-X$ for some $n\ge 1$ if and only if $A_*^{(m)}(X)$ is divisible by $(X^{q^M}-X)^{q^r}$ for some $m\ge 1$. Taking $i\to +\infty$, and using the fact that $A_*$ and $A$ commute, we see that the following statements are equivalent.
\begin{itemize}
\item There do not exist $i\ge 0$ such that $A_*^{(p^{i})}(X)$ is of the form $E(A(X))$ with $E\in \F_q[X]$ additive.
\item There do not exist $n\ge 1$ such that $A_*^{(n)}(X)$ is of the form $E(A^{(s_0)}(X))$ with $E\in \F_q[X]$ additive.
\item There do not exist $n\ge 1$ such that $A_*^{(n)}(X)$ is of the form $$E(A_*(A^{(s_0)}(X)))=E((X^{q^M}-X)^{q^r}),$$ with $E\in \F_q[X]$ additive.
\item There do not exist $n\ge 1$ such that $A_*^{(n)}(X)$ is divisible by $$(X^{q^M}-X)^{q^r};$$
\item There do not exist $n\ge 1$ such that $A_*^{(n)}(X)$ is divisible by $X^{q^M}-X$;
\item There do not exist $n\ge 1$ such that $A_*^{(n)}(y)=0$ for every $y\in \F_{q^M}$.
\item The $\F_p$-linear map $z\mapsto A_*(z)$ is not nilpotent over $\F_{q^M}$.
\end{itemize}
Now, if we assume that $z\mapsto A_*(z)$ is not nilpotent over $\F_{q^M}$, we proved that $s_i=s_0p^i$ for every $i\ge 0$. Take $n>s_0$ and let $j\ge 0$ be the unique integer such that 
$$s_0p^j=s_j< n\le s_{j+1}=s_0p^{j+1},$$
hence $s_A(n)=Mp^{j+1}$. A direct computation yields $j+1=\left\lceil\log_p \frac{n}{s_0}\right\rceil$, from where the result follows.

\subsubsection{Comments on Theorem~\ref{thm:main}}
Observe that, for real numbers $\alpha, \beta>0$, the function $f:\mathbb N\to \mathbb R_{>0}$ given by $f(n)=\alpha p^{\lceil\log_p \beta n\rceil}$ has the following properties: it is an increasing step function such that $f(\mathbb N)=\{\alpha p^j\,:\, j\ge 0\}$ and, for $$a_j:=\# f^{-1}((0, \alpha p^j])=\left\lfloor\frac{p^j}{\beta}\right\rfloor,$$ we have that 
$\lim\limits_{j\to \infty}\frac{a_{j}}{a_{j+1}}=\frac{1}{p}$. In particular, we may conclude that the graph of $f(n)$ resembles a fractal.

Although Theorem~\ref{thm:main} does not guarantee that the function $s_A(n)$ has the form $\alpha p^{\lceil\log_p \beta n\rceil}$ for generic $A$, we comment that its graph shares some of the forementioned features. Item (i) in Theorem~\ref{thm:main} readily implies that $s_A(n)$ is a step function with image set of the form $\{\alpha p^j\,:\, j\ge 0\}$. Finally, looking at the proof of item (iii) of Theorem~\ref{thm:main}, we see that $\{a_j\}_{j\ge 0}$ coincides with the sequence $\{s_j\}_{j\ge 0}$, and we proved that $s_{j+1}\ge ps_j$ for every $j\ge 0$. In particular, we conclude that $\limsup\limits_{j\to \infty}\frac{a_{j}}{a_{j+1}}\le \frac{1}{p}$. However, our results are not sufficient to obtain any conclusion on the existence of the limit $\lim\limits_{j\to \infty}\frac{a_{j+1}}{a_j}$. 

We believe that in~\cite{od}, Odoni was trying to explore a similar issue. In fact, in his results on the behaviour of the function $s_A(n)$, he proves that $s_A(n+r_n)\in \{s_A(n), ps_A(n)\}$ whenever $n$ is sufficiently large and $\{r_n\}_{n\ge 1}$ satisfies $\lim\limits_{n\to \infty}\frac{r_n}{n}=0$: see Lemma 5.2 of~\cite{od} for more details. As $p\ge 2$, item (i) in Theorem~\ref{thm:main} provides a much stronger result in this direction: we only require that $r_n\le (p-1)n$.

\section{Miscellaneous results} 
In this section we discuss some applications and improvements on Theorem~\ref{thm:main}.

\subsection{Iterating special additive polynomials}Here  we obtain an enhanced version of Theorem~\ref{thm:main} in the special case where $A$ is $q$-linearized, i.e., $A(X)$ is of the form $\sum_{i=0}^ma_iX^{q^i}$. The latter is equivalent to $A(X)=L_f(X)$ for some $f\in \F_q[X]$. For simplicity, we assume that $A$ is separable, i.e., $f(X)$ is not divisible by $X$. The general case follows easily: indeed, if $f(X)=X^rF(X)$ with $\gcd(F(X), X)=1$, then 
$$L_f^{(n)}(X)=L_{F}^{(n)}(X)^{q^{rn}}.$$
In particular, $s_A(n)=s_B(n)$ for every $n\ge 0$, where $B(X)=L_F(X)$.
For a polynomial $g\in \F_q[X]$ that is not divisible by $X$, it is direct to verify that there exists $i\ge 1$ such that $g(X)$ divides $X^i-1$. The smallest positive integer with this property is the {\em order} of $g$, which we denote by $\ord(g)$. By a simple calculation, we verify that $g(X)$ divides $X^s-1$ if and only if $s$ is divisible by $\ord(g)$. From these facts, we obtain the following explicit result that generalizes the observation given by Odoni in the case $A(X)=X^p-X$.

\begin{theorem}\label{thm:special}
Let $f\in \F_q[X]$ be a polynomial not divisible by $X$, let $f_0$ be its squarefree part and let $e$ be the least positive integer such that $f(X)$ divides $f_0(X)^e$. If $E=\ord(f_0)$, then for $A=L_f(X)$ and every $n\ge 0$, we have 
$$s_A(n)=E\cdot p^{\lceil \log_p ne\rceil }.$$
 In particular, if $c_A$ is as in Theorem~\ref{thm:main}, then $c_A=Ee$. 
\end{theorem}
\begin{proof}
By employing an Euclidean division and the properties presented in Lemma~\ref{lem:ass} we see that for nonzero $g, h\in \F_q[X]$, the polynomial $g(X)$ divides $h(X)$ if and only if $L_g(X)$ divides $L_h(X)$. In particular, since $A=L_f(X)$ is separable and $Y^{q^j}-Y=L_{X^j-1}(Y)$,  we observe that for every $s\ge 1$ the following are equivalent:
\begin{itemize}
\item $L_f^{(n)}(X)=L_{f^n}(X)$ splits completely in $\F_{q^s}$;
\item $f(X)^n$ divides $X^s-1$;
\item $s$ is divisible by $\ord(f^n)$.
\end{itemize}
Hence $s_A(n)=\ord(f^n)$. The equality $\ord(f^n)=E\cdot p^{\lceil \log_p ne\rceil }$ follows by item (ii) of Lemma 5 in~\cite{R19}. The equality $c_A=Ee$ follows by direct computation.
\end{proof}


\subsection{Proof of Theorem~\ref{thm:main2}}
From items (iii) and (iv) of Theorem~\ref{thm:main}, there exists $\beta_B>0$ such that, for every $n\ge 1$, the irreducible factors of $B^{(n)}(X)$ have all degree at most $\beta_B n$, hence $\frac{\rho_B(n)}{n}\le \beta_B$ for every $n\ge 1$. It remains to provide an absolute lower bound on $\frac{\rho_B(n)}{n}$. We observe that the formal derivative of an affine polynomial is a constant, which is nonzero if and only if the polynomial is separable. The latter is also equivalent to the affine polynomial not being a perfect $p$-th power. Write $B(X)=B_0(X)^{p^m}$, where $m\ge 0$ and $B_0(X)$ is not of the form $S(X)^p$. From hypothesis, $\deg(B_0)=p^d$ for some $d\ge 1$. From the initial observation, it follows by induction on $n\ge 1$ that $B^{(n)}(X)=\tilde{B}_n(X)^{p^{mn}}$ for some affine separable polynomial $\tilde{B}_n$ of degree $p^{dn}$. 
In particular, the set $\mathcal Z_n$ of the distinct roots of $B^{(n)}(X)$ has cardinality $p^{dn}$ and so it follows by the definition that  \begin{equation}\label{eq:av}\rho_B(n)=\frac{p^{dn}}{N_B(n)}.\end{equation}
Our aim is to provide a non trivial bound for $N_B(n)$. Let $\F_{q^M}$ be the splitting field of $B(X)$ over $\F_q$ and let $n$ be a positive integer which we will assume to be sufficiently large. From items (i) and (ii) in Theorem~\ref{thm:main}, we have that $s_B(n)=Mp^t$ for some $t=t(n)$ such that $0\le t<\log_p n+2$. Hence $B^{(n)}(X)$ splits completely in $\F_{q^{Mp^t}}$, i.e., $\mathcal Z_n\subseteq \F_{q^{Mp^t}}$. Moreover, from items (iii) and (iv) in Theorem~\ref{thm:main}, we have that $t\to +\infty$ as $n\to +\infty$, so we can assume that $t$ is sufficiently large.

As $B$ is affine, for each $0\le j\le t$, the set $\mathcal U_j:=\mathcal Z_n\cap \F_{q^{Mp^j}}$ is an $\F_p$-affine space. For each $0\le j\le t$, let $\delta_j$ be the dimension of $\mathcal U_j$. From the inclusions $\F_{q^M}\subset \F_{q^{Mp}}\subset\cdots\subset \F_{q^{Mp^t}}$ we obtain  
$\delta_0\le \cdots\le \delta_{t}=dn$.
Set $\mathcal Y_0=\mathcal U_0$ and, for each $1\le j\le t$, set $\mathcal Y_j=\mathcal U_j\setminus \mathcal U_{j-1}$. Therefore, $\# \mathcal Y_j=p^{\delta_j}-p^{\delta_{j-1}}$ for every $1\le j\le t$. Moreover, if $1\le j\le t$ and $\alpha\in \mathcal Y_j$, then $\alpha\in \F_{q^{Mp^j}}\setminus \F_{q^{Mp^{j-1}}}$ and so the degree $D_{\alpha}$ of the minimal polynomial of $\alpha$ over $\F_{q}$ divides $Mp^j$ but it does not divide $Mp^{j-1}$. The latter implies that $D_{\alpha}$ is divisible by $p^j$ and so $D_{\alpha}\ge p^j$. 
Therefore, for every $1\le j\le t$, the set $\mathcal Y_j$ produces at most $\frac{p^{\delta_j}-p^{\delta_{j-1}}}{p^j}$ distinct irreducible factors of $B^{(n)}(X)$. As $\mathcal Z_n=\bigcup_{i=0}^t\mathcal Y_j$, we obtain
\begin{equation}\label{eq:ineq}N_B(n)\le p^{\delta_0}+\sum_{j=1}^t\frac{p^{\delta_j}-p^{\delta_{j-1}}}{p^j}\le \sum_{j=0}^{t}p^{\delta_j-j}.\end{equation}
We now provide nontrivial bounds for $\delta_j$.  Set $$\mathcal W_j=\{u-v\,:\, u, v\in \mathcal U_j\}.$$ 
Observe that $B^{(n)}(X)=A^{(n)}(X)+\beta_n$ for some $\beta_n\in \F_q$. Therefore, $\mathcal W_j\subseteq \F_{q^{Mp^j}}$ is an $\F_p$-vector space of dimension $\delta_j$, whose elements are roots of $A^{(n)}(X)$. Moreover, the roots of $A^{(n)}(X)$ comprise the set $\mathcal S=\cup_{i=0}^t\mathcal W_j$. As $A$ is additive, it follows that $A(0)=0$ and so $\mathcal S$ also contains the roots of $A^{(r)}(X)$ for $1\le r\le n$.  As $pr\ge r+1$, from item (i) in Theorem~\ref{thm:main}, we obtain that $s_A(r+1)\in \{s_A(r), ps_A(r)\}$ for every $r\ge 1$. In other words, the degree of the splitting field of consecutive iterates of $A$ increases by a factor $p$ each time it increases. The latter implies that we must have the strict inclusions $\mathcal W_0\subsetneq\cdots \subsetneq \mathcal W_t$ and so we obtain $\delta_0<\cdots<\delta_t$. Therefore, $\delta_j\le \delta_t-(t-j)=dn+j-t$ and so $\delta_j-j\le dn-t$ for every $0\le j\le t$. Since $\mathcal U_j\subseteq \F_{q^{Mp^j}}$, if $q=p^R$, we also have the trivial bound $\# \mathcal U_j\le \# \F_{q^{Mp^j}}$, i.e.,  $\delta_j-j\le \delta_j\le RMp^j$. 
Let $k=k(t)$ be the smallest positive integer such that $RMp^{t-k}\le dn-t$, i.e., 
$$p^{-k}\le \frac{dn-t}{RMp^t}=\frac{dn-t}{Rs_B(n)}=\frac{d}{R}\cdot \frac{n}{s_B(n)}\left(1-\frac{t}{dn}\right).$$
From items (iii) and (iv) in Theorem~\ref{thm:main}, the number $\frac{n}{s_B(n)}$ lies in an interval $[a, b]$ with $0<a<b<+\infty$. Moreover, as $0\le t<\log_p n+2$ we obtain $\lim\limits_{n\to \infty}\frac{t}{n}=0$ and
so we conclude that there exists $c>0$ such that $k(t)\le c$ whenever $n$ (hence $t$) is sufficiently large. In particular, taking $n$ large enough, we have $k(t)< t$ and so from Ineq.~\eqref{eq:ineq} we obtain
\begin{align*}N_B(n)\le \sum_{j=0}^{t-k(t)}p^{RMp^j}+\sum_{j=t-k(t)+1}^{t}p^{dn-t}& \le 2p^{RMp^{t-k(t)}}+k(t)p^{dn-t}\\ {} & \le (k(t)+2)p^{dn-t}.\end{align*}
The latter combined with Eq.~\eqref{eq:av} yields $$\rho_B(n)\ge \frac{p^t}{k(t)+2}=\frac{s_B(n)}{M(k(t)+2)}\ge \frac{s_B(n)}{M(c+2)}=n\cdot \frac{s_B(n)}{n}\cdot \frac{1}{M(c+2)},$$ and so the result follows by items (iii) and (iv) in Theorem~\ref{thm:main}.

\subsection{Dynamics of linear maps over finite fields}
Here we provide statistics on the periodic points under the iteration of linear maps over finite fields.  We start with a general structural theorem.
\begin{theorem}\label{thm:dyn}
Let $A\in \F_q[X]$ be an additive polynomial and let $n\ge 1$. Then we have a direct sum of $\F_p$-vector spaces $\F_{q^n}=\mathcal W_0\oplus \mathcal W_1$, where 
$\mathcal W_0=\bigcup_{t\ge 0}\{y\in \F_{q^n}\,:\, A^{(t)}(y)=0\}$ and $\mathcal W_1$ comprises the elements of $\F_{q^n}$ that are periodic under $A$. In particular, the number $\pi_A(n)$ of periodic points of $\F_{q^n}$ under the map $z\mapsto A(z)$ satisfies
$$\pi_A(n)=\# \mathcal W_1=\frac{q^n}{\#\mathcal W_0}.$$
\end{theorem}
\begin{proof}
Observe that, as $A$ is additive, the map $z\mapsto A(z)$ is an $\F_p$-linear map over $\F_{q^n}$. Let $P(X)$ be its characteristic polynomial and write $P(X)=X^rQ(X)$, where $r\ge 0$ and $\gcd(Q(X), X)=1$. We now employ a classical Linear Algebra result: we have $\F_{q^n}= V_0\oplus V_1$, where $V_0$ (resp. $V_1$) is the 
$A$-invariant subspace of $\F_{q^n}$ associated to the factor $X^r$ (resp. $Q(X)$). Moreover, $V_1$ is the largest $A$-invariant subspace where $A$ restricts to a bijection. We clearly have $V_0=\mathcal W_0$. As $\F_{q^n}$ is finite, $A$ restricts to a bijection on an $A$-invariant subspace if and only if the elements of this subspace are periodic under the map $z\mapsto A(z)$. We directly verify that $\mathcal W_1$ is an $A$-invariant subspace, hence $V_1=\mathcal W_1$ and the result follows.
\end{proof}

\subsubsection{Proof of Theorem~\ref{thm:main3}}

The case where $A(X)=aX^{p^h}$ follows by direct computations since $z\mapsto A(z)$ is the zero map for $a=0$ and, if $a\ne 0$, such map is a permutation of the whole field $\overline{\F}_q$. We now assume that $A(X)$ is not of this form. As in the proof of Theorem~\ref{thm:main2}, for every integer $n\ge 1$, we verify that there exists integers $r\ge 0$ and $d\ge 1$ such that $A^{(n)}(X)=\tilde{A}_n(X)^{p^{nr}}$ for some additive separable polynomial $\tilde{A}_n(X)\in\F_q[X]$ of degree $p^{dn}$.

For each positive integer $n$, set $\Delta(n)=\#\bigcup_{t\ge 0}\{y\in \F_{q^n}\,:\, A^{(t)}(y)=0\}$. Theorem~\ref{thm:dyn} implies the identity $\frac{\pi_A(n)}{q^n}=\Delta(n)^{-1}$ so we only need to find values of $n$ where $\Delta(n)$ is relatively small/sufficiently large. Let $\F_{q^M}$ be the splitting field of $A(X)$ over $\F_q$ and let $N$ be the largest positive integer such that $\F_{q^M}$ contains a root of $A^{(N)}(X)$ that is not a root of $A^{(N-1)}(X)$ (under the convention that $A^{(0)}(X)=X$). From item (i) in Theorem~\ref{thm:main}, the roots of $A^{(j)}(X)$ that are not in $\F_{q^M}$ must be contained in $\F_{q^{Mp^s}}$ for some $s\ge 1$. In particular, we obtain $\Delta(Mt)\le p^{dN}$ whenever $\gcd(t, p)=1$. Therefore, 
$$\limsup\limits_{n\to \infty}\frac{\pi_A(n)}{n}\ge\limsup\limits_{t\to \infty}\frac{\pi_A(Mt)}{q^{Mt}}\ge p^{-dN}>0.$$
On the other hand, from item (i) in Theorem~\ref{thm:main}, we have that $A^{(p^i)}(X)$ splits completely in $\F_{q^{Mp^i}}$ and so $\Delta(Mp^i)\ge p^{dp^i}$ for every integer $i\ge 0$. Therefore,
$$\liminf\limits_{n\to \infty}\frac{\pi_A(n)}{n}\le \liminf\limits_{i\to \infty}\frac{\pi(Mp^i)}{q^{Mp^i}}\le \liminf\limits_{i\to\infty} p^{-dp^i}=0.$$

\end{document}